\documentclass[12pt]{amsart}
\usepackage{amsmath, amsthm, amscd, amsfonts}

\setlength{\textwidth}{6.5in}
\setlength{\textheight}{8.7in}
\setlength{\evensidemargin}{-0.2in}
\setlength{\oddsidemargin}{-0.2in}

\newtheorem{theorem}{Theorem}[section]
\newtheorem{lemma}[theorem]{Lemma}
\newtheorem{proposition}[theorem]{Proposition}
\newtheorem{corollary}[theorem]{Corollary}
\theoremstyle{definition}
\newtheorem{definition}[theorem]{Definition}
\newtheorem{example}[theorem]{Example}

\theoremstyle{remark}

\numberwithin{equation}{section}

\newfont{\kh}{msbm10}

\begin{document}
\title[EP modular operators and their products]
{EP modular operators and their products}
\author{K. Sharifi}
\address{Kamran Sharifi, \newline Permanent Address:
Department of Mathematics,
Shahrood University of Technology, P. O. Box 3619995161-316,
Shahrood, Iran
\newline
Current Address: Mathematisches Institut,
Universit\"{a}t M\"{u}nster,
Einsteinstrasse 62, 48149 M\"{u}nster, Germany}
\email{sharifi.kamran@gmail.com and
sharifi@shahroodut.ac.ir}

\subjclass[2010]{Primary 47A05; Secondary  15A09, 46L08, 46L05}
\keywords{EP operator, Moore-Penrose inverse, C*-algebra,
C*-algebra of compact operators, Hilbert C*-module, closed range}

\begin{abstract}
We study first EP modular operators on Hilbert C*-modules and
then we provide necessary and sufficient conditions for the
product of two EP modular operators to be EP. These enable us to
extend some results of Koliha  [{\it Studia Math.} {\bf 139}
(2000), 81--90.] for an arbitrary C*-algebra and the C*-algebras
of compact operators.

\end{abstract}
\maketitle

\section{Introduction.}

A bounded linear operator $T$ with closed range on a complex
Hilbert space $H$ is called an EP operator if $T$ and $T^*$ have
the same range. This was introduced for matrices by Schwerdtfeger
in \cite{Schwerdtfeger} and has been studied in detail by several
authors, see e.g. \cite{Boasso1, Boasso2, DKS, D-K1,
KolihaCommuting, lesnjak, MDK} and references therein. A problem that has
been open for over twenty-five years is when the product of two
EP matrices is again EP \cite{BaskettKatz}. Hartwig and Katz
\cite{HartwigKatz}, and Koliha \cite{KolihaSimpleProof} gave
necessary and sufficient conditions for a product of two $n
\times n$ complex EP matrices to be EP. Djordjevi\'c
\cite{Djordjevic} provided a generalization of the result for EP
operators on Hilbert spaces. In this note we investigate about
the EP operators on Hilbert C*-modules over an arbitrary
C*-algebra of coefficients, and then we reformulate some results
of \cite{KolihaSimpleProof, KolihaCommuting} for the product of
EP modular operators.

Since the finite-dimensional spaces, Hilbert spaces and
C*-algebras can all be regarded as Hilbert C*-modules, one can
study EP modular operators in a unified way in the framework of
Hilbert C*-modules. Indeed, a Hilbert C*-module is an object
like a Hilbert space except that
the inner product is not scalar-valued, but takes its values in a
C*-algebra of coefficients. Since the geometry of these modules
emerges from the C*-valued inner product, some basic properties
of Hilbert spaces like Pythagoras' equality, self-duality, and
decomposition into orthogonal complements must be given up. These
modules play an important role in the modern theory of
C*-algebras and the study of locally compact quantum groups. A
(right) {\it pre-Hilbert C*-module} over a C*-algebra
$\mathcal{A}$ is a right $\mathcal{A}$-module $X$ endowed with an
$\mathcal{A}$-valued inner product $\langle \cdot , \cdot \rangle
: X \times X \to \mathcal{A}, \ (x,y) \mapsto \langle x,y
\rangle$ which is linear in the second variable $y$ (and
conjugate-linear in $x$), satisfying the conditions
$$ \langle x,y \rangle=\langle y,x \rangle ^{*}, \ \langle x,ya \rangle
=\langle x,y \rangle a \ {\rm for} \ {\rm all} \ a \in
\mathcal{A},$$
$$ \langle x,x \rangle \geq 0 \ \ {\rm with} \
   {\rm equality} \ {\rm if} \ {\rm and} \ {\rm only} \
   {\rm if} \ x=0.$$

A pre-Hilbert $\mathcal{A}$-module $X$ is called a {\it Hilbert $
\mathcal{A}$-module} if $X$ is a Banach space with respect to the
norm $\| x \|=\|\langle x,x\rangle \| ^{1/2}$. If $X$, $Y$ are two
Hilbert $ \mathcal{A}$-modules then the set of all ordered pairs
of elements $X \oplus Y$ from $X$ and $Y$ is a Hilbert
$\mathcal{A}$-module with respect to the $\mathcal A$-valued
inner product $\langle (x_{1},y_{1}),(x_{2},y_{2})\rangle=
\langle x_{1},x_{2}\rangle_{X}+\langle y_{1},y_{2}\rangle _{Y}$.
It is called the direct {\it orthogonal sum of $X$ and $Y$}. If
$V$ is a (possibly non-closed) $\mathcal A$-submodule of $X$, then
$V^\bot :=\{ y \in X: ~ \langle x,y \rangle=0~ \ {\rm for} \ {\rm
all}\ x \in V \} $ is a closed $\mathcal A$-submodule of $X$ and
$ \overline{V} \subseteq V^{ \perp \, \perp}$. A Hilbert $\mathcal
A$-submodule $V$ of a Hilbert $\mathcal A$-module $X$ is
orthogonally complemented if $V$ and its orthogonal complement
$V^\bot$ yield $X=V \oplus V^\bot $, in this case, $V$ and its
biorthogonal complement $V^{ \perp \, \perp}$ coincide. For the
basic theory of Hilbert C*-modules we refer to the books
\cite{LAN, M-T}.

Throughout the present paper we assume $\mathcal{A}$ to be an
arbitrary C*-algebra (i.e. not necessarily unital). We use the
notations $Ker(\cdot)$ and $Ran(\cdot)$ for kernel and range of
operators, respectively. We denote by $\mathcal{L}(X,Y)$ the
Banach  space of all bounded adjointable operators between $X$ and
$Y$, i.e., all bounded $\mathcal A$-linear maps $T :X \rightarrow
Y$ such that there exists $T^*:Y \rightarrow X$ with the property
$ \langle Tx,y \rangle =\langle x,T^*y \rangle$ for all $ x \in
X$, $y \in Y$. The C*-algebra $\mathcal{L}(X,X)$ is abbreviated
by $\mathcal{L}(X)$.

In this paper we first briefly investigate some basic facts about
EP modular operators with closed ranges and then we give some factorizations
and characterizations of
such operators. If $T, S$ and $TS$ are EP
modular operators with closed ranges then $Ran(TS)=Ran(T)\cap Ran(S)$.
If, in addition, $Ker(T) + Ker(S)$ is dense in its biorthogonal complement
then we obtain $Ker(TS)= \overline{ Ker(T) + Ker(S)}$. Some special cases for
EP elements of C*-algebras and C*-algebras of compact operators are considered.

\section{Preliminaries}
Closed submodules of Hilbert modules need not to be orthogonally
complemented at all, but Lance states in \cite[Theorem 3.2]{LAN}
under which conditions closed submodules may be orthogonally
complemented. Let $X$ be a Hilbert $\mathcal{A}$-module and
suppose that an operator $T$ in $ \mathcal{L}(X)$ has closed
range, then one has:

\begin{itemize}
\item $Ker(T)$ is orthogonally complemented in $X$, with
      complement $Ran(T^*)$,
\item $Ran(T)$ is orthogonally complemented in $X$, with
      complement $Ker(T^*)$,
\item the map $T^* \in \mathcal{L}(X)$ has closed range, too.
\end{itemize}

The following results express when the product of two modular
operators with closed range again has closed range. Suppose $T, S \in \mathcal{L}(X)$ are
bounded adjointable operators with closed range. Then $TS$ has closed range,
 if and only if $Ker(T)+Ran(S)$ is an orthogonal summand in $X$ if an only if
 $Ker(S^*)+Ran(T^*)$ is an orthogonal summand in $X$. For the proof
of the results and historical notes about the problem we refer to
\cite{SHA/PRODUCT} and references therein.

\newcounter{cou001}
Let $T \in \mathcal{L}(X)$, then a bounded adjointable operator
$T^{ \dag} \in \mathcal{L}(X)$ is called the {\it Moore-Penrose
inverse} of $T$ if
\begin{equation} \label{MPinverse}
T \, T^{ \dag}T=T, \ T^{ \dag}T \, T^{ \dag}= T^{ \dag}, \ (T \,
T^{ \dag})^*=T \, T^{ \dag} \ {\rm and} \ ( T^{ \dag} T)^*= T^{
\dag} T.
\end{equation}
The notation $T^{ \dag}$ is reserved to denote the Moore-Penrose
inverse of $T$. These properties imply that $T^{ \dag}$ is unique
and $ T^{ \dag} T$ and $ T \, T^{ \dag} $ are orthogonal
projections. Moreover, $Ran( T^{ \dag} )=Ran( T^{ \dag}  T)$,
$Ran( T )=Ran( T \, T^{ \dag})$,  $Ker(T)=Ker( T^{ \dag} T)$ and
$Ker(T^{ \dag})=Ker( T \, T^{ \dag} )$ which lead us to $ X= Ker(
T^{ \dag} T) \oplus Ran( T^{ \dag} T)= Ker(T) \oplus Ran( T^{
\dag} )$ and $X= Ker(T^{ \dag}) \oplus Ran(T).$ If $T^{ \dag}$
exists then $T^{ \dag}= \lim_{ \omega \to 0^+} ( \omega 1+
T^*T)^{-1} T^* = \lim_{ \omega \to 0^+} T( \omega 1+ T^*T)^{-1}$,
cf. \cite{SHA/PARTIAL, SHA/Groetsch}.

 Xu and Sheng in \cite{Xu/Sheng} have shown that a bounded
adjointable operator between two Hilbert C*-modules admits a
bounded Moore-Penrose inverse if and only if the operator has
closed range. The reader should be aware of the fact that a
bounded adjointable operator may admit an unbounded operator as
its Moore-Penrose, see \cite{FS2, SHA/PARTIAL, SHA/Groetsch} for
more detailed information.

\begin{definition}Let $X$ be a Hilbert
$ \mathcal{A}$-modules. An operator $T \in \mathcal{L}(X)$ is
called $EP$ if $Ran(T)$ and $Ran(T^*)$ have the same closure.
\end{definition}

In the Hilbert C*-module context, one needs to add the extra
condition, closedness of the range, in order to get a reasonably
good theory. This ensure that an EP operator has a bounded
adjointable Moore-Penrose inverse. Like the general theory of
Hilbert spaces one can easily see that the following conditions
are equivalent:
\begin{itemize}
\item $T$ is EP with closed range,
\item $T$ and $T^*$ have the same kernel,
\item $T$ is Moore-Penrose invertible and $T \, T^{ \dag}= T^{ \dag} \, T$,
\item $Ran(T)$ is orthogonally complemented in $X$, with
      complement $Ker(T)$.
\end{itemize}

\begin{proposition} \label{EP0} Let $X$ be a Hilbert
$ \mathcal{A}$-module and $T \in \mathcal{L}(X)$ have a closed
range. Then the following conditions are equivalent:
\begin{list}{(\roman{cou001})}{\usecounter{cou001}}
\item $T$ is EP with closed range,
\item there exists an isomorphism $V \in \mathcal{L}(X)$ such that $T^*=VT$,
\item there exists an isomorphism $V \in \mathcal{L}(X)$ such that $T^{
\dag}=VT=TV$.
\end{list}
\end{proposition}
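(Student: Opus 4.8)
The plan is to use condition~(i) as a hub: I would prove the two algebraic implications (ii)$\Rightarrow$(i) and (iii)$\Rightarrow$(i) by direct manipulation, and the two constructive implications (i)$\Rightarrow$(ii) and (i)$\Rightarrow$(iii) by exhibiting an explicit invertible $V$. Throughout I would rely on the characterisations of EP-ness listed just before the statement, in particular that for an operator with closed range EP-ness is equivalent to $Ker(T)=Ker(T^{*})$ and to $TT^{\dag}=T^{\dag}T$, together with the Moore-Penrose identities from the preliminaries and the orthogonal splitting $X=Ran(T)\oplus Ker(T)$ that EP-ness provides. Since $T$ has closed range it is Moore-Penrose invertible, so $T^{\dag}$ is available as a bounded adjointable operator in each direction.

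The algebraic directions are short. For (ii)$\Rightarrow$(i) I would start from $T^{*}=VT$ with $V$ invertible and prove $Ker(T)=Ker(T^{*})$: the inclusion $Ker(T)\subseteq Ker(T^{*})$ is clear because $Tx=0$ gives $T^{*}x=VTx=0$, and conversely $T^{*}x=0$ forces $VTx=0$, whence $Tx=0$ by injectivity of $V$; as $T$ already has closed range this says $T$ is EP. For (iii)$\Rightarrow$(i) I would simply compute $TT^{\dag}=T(VT)=(TV)T=T^{\dag}T$ from $T^{\dag}=VT=TV$, which is precisely the EP criterion.

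For the constructive directions I would work on the decomposition $X=Ran(T)\oplus Ker(T)$, on which $P:=TT^{\dag}=T^{\dag}T$ is the orthogonal projection onto $Ran(T)$ and $1-P$ the orthogonal projection onto $Ker(T)=Ker(T^{*})$. For (i)$\Rightarrow$(iii) I would take $V=(T^{\dag})^{2}+(1-T^{\dag}T)$; using $(1-P)T=0$ and $Ker(T^{\dag})=Ker(T)$ one obtains $VT=T^{\dag}$, and using $T(T^{\dag})^{2}=PT^{\dag}=T^{\dag}$ one obtains $TV=T^{\dag}$, while $T^{2}+(1-T^{\dag}T)$ is checked to be a two-sided inverse (the cross terms vanish since $(T^{\dag})^{2}(1-P)=0=(1-P)T^{2}$ and $(T^{\dag})^{2}T^{2}=P$). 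For (i)$\Rightarrow$(ii) I would take $V=T^{*}T^{\dag}+(1-TT^{\dag})$; the identity $T^{*}T^{\dag}T=T^{*}$, which holds because $1-P$ maps onto $Ker(T)=Ker(T^{*})$, yields $VT=T^{*}$, and $T(T^{*})^{\dag}+(1-TT^{\dag})$ serves as an explicit two-sided inverse, using the same identity together with $T^{*}(T^{*})^{\dag}=P$.

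The step I expect to require the most care is the constructive one, but not the part that solves the operator equation, which is essentially forced; rather it is the verification that $V$ is a genuine isomorphism in $\mathcal{L}(X)$ and not merely a bounded adjointable solution. This is exactly where EP-ness enters, since it is the splitting $X=Ran(T)\oplus Ker(T)$ that lets $V$ act as (a power or twist of) the isomorphism $T|_{Ran(T)}$ on $Ran(T)$ and as the identity on $Ker(T)$; writing down the explicit adjointable two-sided inverses above is the cleanest way to secure invertibility while avoiding any appeal to inverse-closedness of $\mathcal{L}(X)$ inside the algebra of all bounded operators.
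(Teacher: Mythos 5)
Your proof is correct, and it is worth comparing with the paper's, because although the operators you construct are literally the same as the paper's, the way you build and verify them is genuinely different. The paper defines $V$ piecewise on the orthogonal decomposition $X=Ran(T)\oplus Ker(T)$: for (ii) it takes $T^*\,(T_{|Ker(T)^{\perp}})^{-1}$ on $Ran(T)$ and the identity on $Ker(T)$, and for (iii) it takes $(T_{|Ker(T)^{\perp}})^{-2}$ on $Ran(T)$ and the identity on $Ker(T)$, relying on the fact that $T$ is bounded below on $Ker(T)^{\perp}$ to invert the restriction. Your formulas $V=T^{*}T^{\dag}+(1-TT^{\dag})$ and $V=(T^{\dag})^{2}+(1-T^{\dag}T)$ are exactly these maps written in closed form, since for an EP operator $T^{\dag}$ acts as $(T_{|Ran(T)})^{-1}$ on $Ran(T)$ and as $0$ on $Ker(T)=Ker(T^{\dag})$. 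What your route buys: boundedness and adjointability of $V$ are automatic (it is a polynomial in adjointable operators), whereas the paper's piecewise definition requires the separate claim ``$V,S\in\mathcal{L}(X)$'', which it asserts rather than proves; and invertibility follows from your explicit algebraic inverses $T^{2}+(1-T^{\dag}T)$ and $T(T^{*})^{\dag}+(1-TT^{\dag})$ (your cross-term computations, using $PT=T$, $T^{\dag}P=T^{\dag}$, $PT^{*}=T^{*}$ with $P=TT^{\dag}=T^{\dag}T$, all close correctly), with no appeal to the bounded-below estimate the paper cites. You also spell out (ii)$\Rightarrow$(i) via $Ker(T)=Ker(T^{*})$ and (iii)$\Rightarrow$(i) via $TT^{\dag}=T(VT)=(TV)T=T^{\dag}T$; the paper dismisses these as trivial, and your two-line arguments are exactly the intended ones. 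In short: same construction in spirit, but your algebraic packaging is self-contained and arguably tighter than the paper's.
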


\begin{proof} Suppose $T$ is EP. Then $Ker(T)$ is orthogonally
complemented and there exists $c > 0$ such that $ \| T \, x \|
\geq c \| x \| $ for all $x \in Ker(T)^{\perp}$, cf.
\cite[Proposition 1.3]{F-S}. The latter inequality implies that
the module map $ T_{|_{Ker(T)^{ \perp}}}: Ran(T^*)=Ran(T) \to
Ran(T)$ has a bounded inverse, which allows us to define $
\mathcal{A}$-module map

\[ \qquad V x \, =~
\begin{cases}
T^* \,(T_{| Ker(T)^{ \perp}})^{-1}x ~ & \text{ if   $x \in Ran(T)$}\\
x ~ & \text{ if $x \in Ker(T). $}\
\end{cases}
\]

Then $V \in \mathcal{L}(X)$ is an isomorphism which satisfies
$T^*=VT$, that is, (ii) holds. To prove (iii) we define

\[ \qquad V x \, =~
\begin{cases}
(T_{| Ker(T)^{ \perp}})^{-2}x ~ & \text{ if   $x \in Ran(T)$}\\
x ~ & \text{ if $x \in Ker(T), $}\
\end{cases}
\]
and
\[ \qquad Sx \, =~
\begin{cases}
(T_{| Ker(T)^{ \perp}})^{-1}x ~ & \text{ if   $x \in Ran(T)$}\\
0 ~ & \text{ if $x \in Ker(T). $}\
\end{cases}
\] \\
Then $V, S \in \mathcal{L}(X)$. Using the orthogonal direct sum
decompositions, we have $TV=VT=S$. Moreover, $T$ and $S$ satisfy
equations (\ref{MPinverse}), i.e., $S$ is the Moore-Penrose
inverse of $T$. The remaining parts follow trivially.
\end{proof}

\section{On the product of ep modular operators}
In this section we try to generalize some results of Koliha
\cite{KolihaSimpleProof, KolihaCommuting} to the framework of
Hilbert C*-modules. Some special cases for EP elements of
C*-algebras and the C*-algebra of compact operators are also
obtained.

\begin{lemma} \label{EP71} Suppose $X$ is a Hilbert
$ \mathcal{A}$-module. Let $T \in \mathcal{L}(X)$ have
closed range and $S \in \mathcal{L}(X)$ be
an arbitrary operator which commutes with $T$.  If $T$ is normal or EP, then $S$
commutes with $T^{ \dag}$.
\end{lemma}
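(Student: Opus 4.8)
The plan is to construct $T^{\dag}$ out of data through which $S$ can be commuted, using the resolvent formula $T^{\dag} = \lim_{\omega \to 0^+} (\omega 1 + T^*T)^{-1} T^*$ recorded in the Preliminaries; this is legitimate because $T$ has closed range. If I can show that $S$ commutes with the bounded operator $(\omega 1 + T^*T)^{-1} T^*$ for every $\omega > 0$, then letting $\omega \to 0^+$ and invoking the continuity of multiplication in $\mathcal{L}(X)$ will give $S T^{\dag} = T^{\dag} S$ at once.

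First I would record the immediate consequences of $ST = TS$: the operator $S$ commutes with every polynomial in $T$ and it leaves both $Ker(T)$ and $Ran(T)$ invariant, since $Tx = 0$ forces $T(Sx) = S(Tx) = 0$, while $S(Tx) = T(Sx) \in Ran(T)$. Next I would reduce the whole problem to a single commutation relation. Suppose for the moment that $S T^* = T^* S$. Then $S T^*T = T^* S T = T^* T S$, so $S$ commutes with $T^*T$, hence with $\omega 1 + T^*T$, and therefore, after multiplying on the left and right by its inverse, with the resolvent $(\omega 1 + T^*T)^{-1}$ for each $\omega > 0$. Combining this with $S T^* = T^* S$ moves $S$ through the entire product $(\omega 1 + T^*T)^{-1} T^*$, and the limit formula then delivers $S T^{\dag} = T^{\dag} S$.

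Everything therefore hinges on proving $S T^* = T^* S$, and this is the step I expect to be the main obstacle. Taking adjoints in $ST = TS$ yields only $T^* S^* = S^* T^*$, that is, the commutation of the \emph{adjoint} $S^*$ with $T^*$; passing from this to $S T^* = T^* S$ is the delicate point. When $S$ is self-adjoint the passage is immediate, since then $S^* = S$; but for a general adjointable $S$ the self-adjoint parts $\tfrac{1}{2}(S + S^*)$ and $\tfrac{1}{2i}(S - S^*)$ need not commute with $T$ separately, so a reduction to the self-adjoint case is not available. To close the gap I would lean on the closed-range geometry supplied by the Preliminaries, namely the orthogonal decompositions $X = Ker(T) \oplus Ran(T^*)$ and $X = Ker(T^*) \oplus Ran(T)$ and the bounded invertibility of $T$ on $Ker(T)^{\perp}$, and attempt to upgrade the invariance of $Ker(T)$ and $Ran(T)$ under $S$ to genuine compatibility of $S$ with these splittings; it is precisely this upgrade, which is what would force $S$ to respect $T^*$, where I expect the real difficulty to concentrate.
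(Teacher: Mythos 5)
Your overall framework is exactly the paper's: both proofs construct $T^{\dag}$ via the limit $T^{\dag}=\lim_{\omega\to 0^+}(\omega 1+T^*T)^{-1}T^*$ and reduce the whole problem to the single commutation relation $ST^*=T^*S$, from which commutation with the resolvent and then with $T^{\dag}$ follows just as you describe. But you stop precisely at the crux: you correctly observe that taking adjoints in $ST=TS$ yields only $S^*T^*=T^*S^*$, and then you leave $ST^*=T^*S$ unproved, hoping to extract it from the closed-range decompositions $X=Ker(T)\oplus Ran(T^*)$ and $X=Ker(T^*)\oplus Ran(T)$. The paper closes this step in one line by citing the Fuglede--Putnam theorem for adjointable module maps (\cite[Theorem 2.8]{SHA/NORMALITY}); that theorem is the missing idea, and it is not a technicality one can replace by the geometric upgrade you sketch.

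Indeed, the geometric route must fail at the stated level of generality, because the implication $ST=TS\Rightarrow ST^*=T^*S$ is simply false for an arbitrary closed-range $T$: take $X=\mathbb{C}^2$, $T=\left(\begin{smallmatrix}0&1\\0&0\end{smallmatrix}\right)$ and $S=T$. Then $S$ commutes with $T$, leaves $Ker(T)$ and $Ran(T)$ invariant, and $T$ has closed range, yet $T^{\dag}=T^*$ and $TT^*\neq T^*T$, so $S$ commutes with neither $T^*$ nor $T^{\dag}$. This shows two things: first, no amount of compatibility with the splittings $X=Ker(T)\oplus Ran(T^*)$ can be forced from $ST=TS$ alone, so the ``upgrade'' you defer to is not merely difficult but impossible; second, the Fuglede--Putnam step requires a normality-type hypothesis on $T$, which the paper's citation implicitly supplies (and which, one should note, the lemma as literally stated omits --- the example above is in fact a counterexample to the bare statement, so the lemma is really being used in the paper under hypotheses, such as normality of $T$, that make the Fuglede--Putnam theorem applicable). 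Your proposal is therefore incomplete: it reproduces the paper's easy half and names the hard half without the ingredient that resolves it.
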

\begin{proof} Using Fuglede-Putnam Theorem
\cite[Theorem 2.8]{SHA/NORMALITY}, the operator $S$ commutes with $T^*$.
Hence, $S$ commutes with $( \omega 1+
T^*T)^{-1} T^* $, $\omega > 0$, and its limit $T^{ \dag}$.
\end{proof}

\begin{proposition} \label{EP72} Suppose $X$ is a Hilbert
$ \mathcal{A}$-module. Let $T, S \in \mathcal{L}(X)$ be EP
operators with closed range and $TS=ST$. Then $TS$ is an EP
operator with closed range.
\end{proposition}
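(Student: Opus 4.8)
The plan is to prove that $TS$ is EP with closed range by exploiting the commutativity $TS=ST$ together with the characterizations of EP operators already listed in the excerpt, especially the criterion that an operator $T$ with closed range is EP precisely when $TT^\dag = T^\dag T$. The strategy is first to establish that all four operators $T$, $S$, $T^\dag$, $S^\dag$ commute pairwise, then to verify directly that $S^\dag T^\dag$ is the Moore-Penrose inverse of $TS$ (which in particular shows $TS$ has closed range, by the result of Xu and Sheng cited earlier), and finally to check the EP condition $(TS)(TS)^\dag = (TS)^\dag(TS)$.

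First I would assemble the commutation relations. Since $T$ commutes with $S$ and $S$ has closed range, Lemma~\ref{EP71} gives that $T$ commutes with $S^\dag$; symmetrically $S$ commutes with $T^\dag$. Applying Lemma~\ref{EP71} once more, from $T$ commuting with $S^\dag$ we get $T^\dag$ commuting with $S^\dag$. Thus the four operators $T, S, T^\dag, S^\dag$ all commute with one another. Because $T$ and $S$ are EP, we also have $TT^\dag = T^\dag T$ and $SS^\dag = S^\dag S$, so the associated range projections commute with everything in sight.

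Next I would propose $R := S^\dag T^\dag$ as a candidate Moore-Penrose inverse of $TS$ and verify the four defining identities in~(\ref{MPinverse}). Using the pairwise commutativity to freely rearrange factors, the computation $(TS)R(TS) = TSS^\dag T^\dag TS = T T^\dag S S^\dag T S$ collapses, since $TT^\dag$ and $SS^\dag$ are projections onto the relevant ranges and commute past $T$ and $S$, to recover $TS$; the dual identity $R(TS)R = R$ is analogous. For the symmetry conditions, $(TS)R = TSS^\dag T^\dag = SS^\dag T T^\dag$ is a product of two commuting self-adjoint projections, hence self-adjoint, and likewise $R(TS)$; moreover these two products are in fact equal by commutativity. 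This simultaneously yields $(TS)(TS)^\dag = (TS)^\dag(TS)$, which is exactly the EP criterion, and establishes that $TS$ has closed range since it is Moore-Penrose invertible with bounded inverse.

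The main obstacle I expect is the bookkeeping needed to justify every rearrangement: one must be careful that the relevant range projections $TT^\dag$ and $SS^\dag$ genuinely commute with $T$, $S$, $T^\dag$, $S^\dag$, which follows from the EP hypothesis and the commutation relations but should be stated cleanly rather than assumed. A secondary subtlety is confirming that closedness of the range of $TS$ is legitimately obtained from the existence of a \emph{bounded} adjointable Moore-Penrose inverse; this is precisely the Xu-Sheng equivalence recalled in the Preliminaries, so once $R = S^\dag T^\dag$ is shown to be a bounded adjointable operator satisfying~(\ref{MPinverse}), closedness comes for free. Apart from this, the argument is a direct verification and should present no genuine difficulty.
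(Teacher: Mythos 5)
Your proposal is correct and follows essentially the same route as the paper's proof: invoke Lemma~\ref{EP71} to get that $T$, $S$, $T^{\dag}$, $S^{\dag}$ mutually commute, verify that $S^{\dag}T^{\dag}$ satisfies the Moore--Penrose equations~(\ref{MPinverse}) for $TS$, and conclude both closedness of the range and the commutation $(TS)(TS)^{\dag}=(TS)^{\dag}(TS)$. The only difference is that you spell out the verification and the appeal to the Xu--Sheng equivalence, which the paper leaves implicit.
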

\begin{proof} The operators $T, S, T^{ \dag}$ and $S^{ \dag}$ are mutually commute
by Lemma \ref{EP71}. Therefore $TS$ and $S^{ \dag} T^{ \dag}$ satisfies equations
(\ref{MPinverse}), i.e., $TS$ is Moore-Penrose invertible and
its Moore-Penrose inverse equals $S^{ \dag} T^{ \dag}=T^{ \dag}S^{ \dag}$.
These implies that $TS$ has a closed range and $(TS)^{ \dag}$ commutes with
$TS$.
\end{proof}


\begin{proposition} \label{EP06} Let $X$ be a Hilbert
$ \mathcal{A}$-module. If $T, S, TS  \in \mathcal{L}(X)$ are EP
operators then $T(\overline{Ran(S)}) \subseteq \overline{Ran(S)}$
and $S^*(\overline{Ran(T)}) \subseteq \overline{Ran(T)}$.
\end{proposition}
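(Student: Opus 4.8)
The plan is to exploit the elementary identity $T(Ran(S)) = Ran(TS)$ together with the defining property of an EP operator, namely that for an EP operator $A$ the ranges of $A$ and $A^*$ have the same closure. Both inclusions then follow from short chains of range inclusions, the only continuous-algebra inputs being that a bounded operator maps the closure of a set into the closure of its image and that $(TS)^*=S^*T^*$.

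For the first inclusion I would begin with the continuity of $T$, which gives $T(\overline{Ran(S)}) \subseteq \overline{T(Ran(S))} = \overline{Ran(TS)}$. Since $TS$ is EP, $\overline{Ran(TS)} = \overline{Ran((TS)^*)} = \overline{Ran(S^*T^*)}$, and because $Ran(S^*T^*) \subseteq Ran(S^*)$ we obtain $\overline{Ran(S^*T^*)} \subseteq \overline{Ran(S^*)}$. Finally the EP property of $S$ yields $\overline{Ran(S^*)} = \overline{Ran(S)}$, closing the chain and giving $T(\overline{Ran(S)}) \subseteq \overline{Ran(S)}$.

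The second inclusion is the adjoint counterpart. Using that $T$ is EP one replaces $\overline{Ran(T)}$ by $\overline{Ran(T^*)}$; continuity of $S^*$ then gives $S^*(\overline{Ran(T^*)}) \subseteq \overline{S^*(Ran(T^*))} = \overline{Ran(S^*T^*)} = \overline{Ran((TS)^*)}$. Invoking that $TS$ is EP turns this into $\overline{Ran(TS)}$, and the trivial inclusion $Ran(TS) \subseteq Ran(T)$ completes the argument, so $S^*(\overline{Ran(T)}) \subseteq \overline{Ran(T)}$.

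The computations are routine; the only thing that needs care is applying the EP hypothesis to the correct operator at each link, to $TS$ in order to pass between $\overline{Ran(TS)}$ and $\overline{Ran(S^*T^*)}$, and to $S$ (respectively $T$) in order to convert between the closure of the range of an operator and that of its adjoint. Under the standing assumption that the ranges are closed, every closure bar may be dropped and the inclusions sharpen to the cleaner equalities $T(Ran(S)) = Ran(TS) \subseteq Ran(S)$ and $S^*(Ran(T)) = Ran(TS) \subseteq Ran(T)$; I do not anticipate any genuine obstacle beyond this bookkeeping.
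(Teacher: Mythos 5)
Your proof is correct and takes essentially the same route as the paper: continuity of $T$, the identity $T(Ran(S))=Ran(TS)$, the EP property of $TS$ to pass between $\overline{Ran(TS)}$ and $\overline{Ran(S^*T^*)}$, and the EP property of $S$ (resp.\ $T$) to close the chain --- the paper writes out exactly this chain for the first inclusion and dismisses the second as ``similar,'' which you spell out correctly. One small caveat: this proposition carries no closed-range hypothesis (that is precisely why the closure bars appear in the statement), so your closing remark about dropping the bars is an aside about a stronger hypothesis rather than part of what is to be proved.
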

\begin{proof} Continuity of $T$ implies that $T(\overline{Ran(S)})
\subseteq \overline{T(Ran(S))}$ and so $ \overline{
T(\overline{Ran(S)})} = \overline{T(Ran(S))}$. We therefore have
$$T(\overline{Ran(S)}) \subseteq  \overline{ T(\overline{Ran(S)})}
 = \overline{T(Ran(S))}= \overline{Ran(TS)}= \overline{Ran(S^*T^*)}
\subseteq \overline{Ran(S)}.$$ The second inclusion follows in a
similar manner.
\end{proof}

Koliha in \cite{KolihaSimpleProof, KolihaCommuting} demonstrated
that the converse of the above statement is true in the case of
finite dimensional spaces. Indeed, he showed that for matrices $T$
and $S$, $TS$ is EP if and only if $Ker(T) \subseteq Ker(TS)$ and
$Ran(TS) \subseteq Ran(S)$. However, the statement does not hold in
the case of Hilbert space or Hilbert C*-modules.

\begin{example}Let $\mathcal{A}$ be unital
C*-algebra and $H_{ \mathcal{A}}$ be the standard Hilbert $
\mathcal{A}$-module which is countably generated by orthonormal
basis $ \xi _{j}=(0,...,0,1,0,...,0),~ j \in \mathbb{N}$. Let
$W=\overline{span \{\xi _{2j}:~ j \in \mathbb{N}\}}$ and $S$ be
the orthogonal projection onto the closed submodule $W$. We
define $T_0$ by $T_0( \xi_1)= \xi_2$, $T_0( \xi_{2j})=\xi_{2j+2}$
and $ T_0( \xi_{2j+1})=\xi_{2j-1}$, for all $j \in \mathbb{N}$.
The inverse of $T_0$ is defined by $T_0^{-1} (\xi_2)= \xi_1$,
$T_0^{-1}( \xi_{2j})=\xi_{2j-2}$
and $ T_0^{-1}(\xi_{2j+1})= \xi_{2j+3}$. Then $T_0$ and $ T_0^{-1}$
can be extended uniquely to $T$ and $T^{-1}$ on $H_{ \mathcal{A}}$
which satisfy $T^*=T^{-1}$. One can easily see that
$T(\overline{Ran(S)}) \subseteq \overline{Ran(S)}$ and
$S^*(\overline{Ran(T)})= Ran(S^*)=Ran(S)=W \subseteq \overline{Ran(T)}$.
However, $TS$ is not EP since $ \xi_2 $ is orthogonal to $ \overline{Ran(TS)}$
and to $Ker(TS)$.
\end{example}

\begin{lemma} \label{EP1} Let $X$ and $Y$ be a Hilbert
$ \mathcal{A}$-modules and $T \in \mathcal{L}(X,Y)$ have a closed
range. If $ \mathcal{A}$-submodule $W$ is orthogonally
complemented in $Y$ then the operator $T$ has a matrix
representation with respect to the orthogonal sums $X=Ran(T^*)
\oplus Ker(T)$ and $Y=W \oplus W^{ \perp}$ as follows:

\begin{equation} \label{EP2}
T=\begin{bmatrix} T_1 & 0 \\ T_2 & 0 \end{bmatrix}:
\begin{bmatrix} Ran(T^*) \\ Ker(T) \end{bmatrix} \to
\begin{bmatrix} W  \\ W^{ \perp}  \end{bmatrix}.
\end{equation}
In this case, $A=T_{1}^{*} \, T_1 + T_2 \, T_{2}^{*} : Ran(T^*)
\to Ran(T^*)$ is invertible. Moreover,

\begin{equation} \label{EP3}
T^{ \dag}=\begin{bmatrix} A^{-1}T_{1}^{*} & A^{-1}T_{2}^{*} \\ 0 &
0
\end{bmatrix}.
\end{equation}
\end{lemma}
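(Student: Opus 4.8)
The plan is to read everything off the block matrix decomposition and reduce the four Moore--Penrose identities to elementary block multiplications, with the only substantial point being the invertibility of $A$. First I would justify the displayed form \eqref{EP2}. Since $T$ has closed range, the preliminaries give the orthogonal decomposition $X = Ran(T^*) \oplus Ker(T)$, and by hypothesis $W$ is orthogonally complemented so $Y = W \oplus W^{\perp}$; writing $P_W, P_{W^{\perp}}$ for the corresponding orthogonal projections and setting $T_1 = P_W\, T|_{Ran(T^*)}$ and $T_2 = P_{W^{\perp}}\, T|_{Ran(T^*)}$, the second column of the matrix vanishes simply because $T$ annihilates $Ker(T)$. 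Computing $T^*T$ in these coordinates then shows that its $(1,1)$-block is exactly $A = T_1^* T_1 + T_2^* T_2 : Ran(T^*) \to Ran(T^*)$, with the remaining three blocks zero; in particular $A$ is a positive adjointable operator on $Ran(T^*)$.

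The main obstacle, and the key step, is to show that $A$ is invertible. For $x \in Ran(T^*) = Ker(T)^{\perp}$ one has $\langle Ax, x\rangle = \langle T_1 x, T_1 x\rangle + \langle T_2 x, T_2 x\rangle = \langle Tx, Tx\rangle$. Because $T$ has closed range there is $c>0$ with $\|Tx\| \geq c\|x\|$ on $Ker(T)^{\perp}$ (as used in the proof of Proposition \ref{EP0}, cf. \cite[Proposition 1.3]{F-S}), whence by the Cauchy--Schwarz inequality $\|Ax\|\,\|x\| \geq \|\langle Ax, x\rangle\| = \|Tx\|^2 \geq c^2\|x\|^2$, so $\|Ax\| \geq c^2\|x\|$ and $A$ is bounded below on $Ran(T^*)$. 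A positive, bounded-below adjointable operator has closed range and trivial kernel, and since it is self-adjoint its range is orthogonally complemented with complement $Ker(A) = 0$; hence $A$ maps $Ran(T^*)$ bijectively onto itself and $A^{-1} \in \mathcal{L}(Ran(T^*))$.

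It remains to verify that the operator $S$ given by the matrix in \eqref{EP3} satisfies the four defining relations \eqref{MPinverse}. Multiplying blocks, $ST = \mathrm{diag}(A^{-1}A, 0) = \mathrm{diag}(I, 0)$ is precisely the orthogonal projection of $X$ onto $Ran(T^*)$ along $Ker(T)$, so $(ST)^* = ST$ holds and $TST = T$, $STS = S$ follow at once. For the last identity I would compute $TS$ as a full $2\times 2$ block matrix with entries built from $T_i A^{-1} T_j^*$; its diagonal blocks are self-adjoint because $A^{-1}$ is self-adjoint, and the two off-diagonal blocks are mutual adjoints for the same reason, giving $(TS)^* = TS$. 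Thus $S = T^{\dag}$, which is the assertion \eqref{EP3}. Every step beyond the invertibility of $A$ is routine block algebra.
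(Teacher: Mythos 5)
Your proof is correct, and it follows the paper's overall skeleton---the matrix form \eqref{EP2} read off from the two orthogonal decompositions, invertibility of $A$, and then a block-by-block verification of the four equations \eqref{MPinverse}---but it handles the one substantial step, the invertibility of $A$, by a genuinely different argument. (You also, correctly, work with $A=T_1^*T_1+T_2^*T_2$, the $(1,1)$-block of $T^*T$; the expression $T_2\,T_2^*$ in the statement is a typo, and the paper's own proof computes $\langle Ax,x\rangle=\langle T_1x,T_1x\rangle+\langle T_2x,T_2x\rangle$, i.e.\ it too uses $T_2^*T_2$.) The paper splits invertibility into two halves: injectivity by the positivity argument ($Ax=0$ forces $T_1x=T_2x=0$, hence $x\in Ker(T)\cap Ran(T^*)=\{0\}$), and surjectivity by citing \cite[Lemma 2.1]{SHA/PRODUCT}, which gives $Ran(T^*T)=Ran(T^*)$ when $T$ has closed range. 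You instead prove that $A$ is bounded below: the identity $\langle Ax,x\rangle=\langle Tx,Tx\rangle$ (valid because the cross terms $\langle T_1x,T_2x\rangle$ vanish, $W$ being orthogonal to $W^\perp$), the estimate $\|Tx\|\geq c\|x\|$ on $Ker(T)^\perp$ from \cite[Proposition 1.3]{F-S}---the same fact the paper invokes in the proof of Proposition \ref{EP0}---and the Cauchy--Schwarz inequality together give $\|Ax\|\geq c^2\|x\|$. Being bounded below, $A$ is injective with closed range, and then self-adjointness plus \cite[Theorem 3.2]{LAN} force $Ran(A)=Ran(T^*)$, since the orthogonal complement of $Ran(A)$ in $Ran(T^*)$ is $Ker(A^*)=Ker(A)=\{0\}$. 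Your route is self-contained apart from a fact the paper needs elsewhere anyway, and it yields the explicit bound $\|A^{-1}\|\leq c^{-2}$; the paper's route is shorter because surjectivity is delegated wholesale to the cited range lemma. The closing verification of \eqref{EP3} is the same routine computation in both proofs: your observations that $ST=\mathrm{diag}(I,0)$ is the orthogonal projection onto $Ran(T^*)$ (which immediately gives $TST=T$, $STS=S$, $(ST)^*=ST$) and that $(TS)^*=TS$ because $(A^{-1})^*=A^{-1}$ makes the diagonal blocks self-adjoint and the off-diagonal blocks mutual adjoints are exactly what the paper's phrase ``satisfy equations \eqref{MPinverse}'' compresses.
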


\begin{proof}The operator $T$ in $\mathcal{L}(Ran(T^*) \oplus Ker(T), W \oplus W^{ \perp})$
can be represented by the matrix

\begin{equation} \label{EP22}
T=\begin{bmatrix} T_1 & T_3 \\ T_2 & T_4 \end{bmatrix}:
\begin{bmatrix} Ran(T^*) \\ Ker(T) \end{bmatrix} \to
\begin{bmatrix} W  \\ W^{ \perp}  \end{bmatrix},
\end{equation}

in which,

\begin{eqnarray*}
T_1 &=& T_{|_{Ran(T^*)}} : Ran(T^*) \to W, \\
T_2 &=& T_{|_{Ran(T^*)}} : Ran(T^*) \to W^{ \perp}, \\
T_3 &=& T_{|_{Ker(T)}}  ~ : Ker(T) \to W, \\
T_4 &=& T_{|_{Ker(T)}}  ~ : Ker(T) \to W^{ \perp}.
\end{eqnarray*}
Since $T( Ker(T))=0$, we obtain $T_3=T_4=0$. Suppose $x \in
Ran(T^*)$ and $Ax=0$ then $0= \langle T_{1}^{*} \, T_1 + T_2 \,
T_{2}^{*}x,x \rangle= \langle T_1x,T_1x \rangle + \langle
T_2x,T_2x \rangle \geq \langle T_1x,T_1x \rangle \geq 0$, which
implies $T_1x=0$. Similarly, $T_2x=0$ and so $x \in Ker(T) \cap
Ran(T^*)=\{ 0 \}$, i.e., $A$ is injective. Using closedness of the
range of $T^*$ and \cite[Lemma 2.1]{SHA/PRODUCT}, we have
$Ran(T^* T)=Ran(T^*)$ which follows subjectivity of $A$. Hence,
$A$ is invertible. Finally, the operators $T= \left[
\begin{smallmatrix} T_1&0\\ T_2&0 \end{smallmatrix} \right]$ and
 $ \left[ \begin{smallmatrix} A^{-1}T_{1}^{*} & A^{-1}T_{2}^{*} \\ 0 &
0 \end{smallmatrix} \right]$ satisfy equations (\ref{MPinverse}) which obtain the
matrix form for the Moore-Penrose inverse of $T$.
\end{proof}

\begin{lemma} \label{EP4} Let $X$ be a Hilbert
$ \mathcal{A}$-module and $T \in \mathcal{L}(X)$ with closed
range. Then $T$ is EP if and only if it is of the matrix form

\begin{equation} \label{EP5}
T=\begin{bmatrix} T_1 & 0 \\ 0 & 0 \end{bmatrix}:
\begin{bmatrix} Ran(T) \\ Ker(T) \end{bmatrix} \to
\begin{bmatrix} Ran(T)  \\ Ker(T)  \end{bmatrix},
\end{equation}
for some invertible operator $T_1 \in \mathcal{L}(Ran(T),Ran(T))$.
\end{lemma}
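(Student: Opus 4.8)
The plan is to prove both implications by reducing to the matrix calculus of Lemma \ref{EP1}, taking the orthogonally complemented submodule $W$ to be $Ran(T)$ itself. First I would treat the forward implication. Assume $T$ is EP with closed range. Then, by the equivalences recorded before Proposition \ref{EP0}, one has $Ker(T)=Ker(T^*)$ and $Ran(T)$ is orthogonally complemented with complement $Ker(T)$; since both $Ran(T)$ and $Ran(T^*)$ are closed and share the same closure, they coincide, so $Ran(T^*)=Ran(T)$. This lets me apply Lemma \ref{EP1} with $W=Ran(T)$: indeed $W$ is orthogonally complemented with $W^\perp=Ran(T)^\perp=Ker(T^*)=Ker(T)$, and the domain decomposition $Ran(T^*)\oplus Ker(T)$ becomes $Ran(T)\oplus Ker(T)$. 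Hence $T$ acquires the form $T=\left[\begin{smallmatrix} T_1 & 0 \\ T_2 & 0\end{smallmatrix}\right]$ from $Ran(T)\oplus Ker(T)$ to $Ran(T)\oplus Ker(T)$, together with the invertibility of $A=T_1^*T_1+T_2T_2^*$.

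The next step is to annihilate the off-diagonal block and to promote $T_1$ to an isomorphism. For any $x\in Ran(T^*)=Ran(T)$ the element $Tx$ lies in $Ran(T)=W$, so its component $T_2x$ in $W^\perp$ vanishes; as $x$ was arbitrary, $T_2=0$. Then $A=T_1^*T_1$ is invertible, which forces $T_1$ to be bounded below, hence injective with closed range; moreover $Ran(T_1)=Ran(T)=W$ because the second column of the matrix is zero, so $T_1$ is onto. A bijective bounded adjointable operator has a bounded adjointable inverse, so $T_1\in\mathcal{L}(Ran(T),Ran(T))$ is invertible and $T$ has the desired form (\ref{EP5}).

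For the converse, suppose $T$ is given in the form (\ref{EP5}) relative to $Ran(T)\oplus Ker(T)$ with $T_1$ invertible. The very existence of this representation presupposes that $Ran(T)$ is orthogonally complemented with complement $Ker(T)$, which is exactly the fourth item in the list of equivalent EP conditions; alternatively, reading off $T^*=\left[\begin{smallmatrix} T_1^* & 0 \\ 0 & 0\end{smallmatrix}\right]$ and using that $T_1$ and $T_1^*$ are both invertible gives $Ran(T)=Ran(T^*)=$ the first summand, so $T$ and $T^*$ have the same closed range. Either way $T$ is EP.

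I expect the main obstacle to be the bookkeeping in the forward direction: one must verify that the EP hypothesis aligns the two orthogonal decompositions supplied by Lemma \ref{EP1} — concretely that $W^\perp=Ker(T)$ and $Ran(T^*)=Ran(T)$ — before the matrix form over $Ran(T)\oplus Ker(T)$ is even meaningful, and then extract invertibility of $T_1$ from the invertibility of $A$ once $T_2$ has been shown to vanish. The converse is comparatively soft, amounting to recognizing the presupposed decomposition as characterization (iv) of EP.
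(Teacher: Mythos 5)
Your proof is correct and follows essentially the same route as the paper's: both directions hinge on the matrix representation of Lemma \ref{EP1} specialized to $W=Ran(T)$, with the EP hypothesis identifying $Ran(T^*)$ with $Ran(T)$ and $W^{\perp}=Ker(T^*)$ with $Ker(T)$. The only difference is cosmetic: in the converse the paper exhibits the Moore--Penrose inverse $T^{\dag}=\left[\begin{smallmatrix} T_1^{\,-1} & 0\\ 0 & 0 \end{smallmatrix}\right]$ and applies the commutation criterion, whereas you invoke the orthogonal-complementation criterion (iv) (equivalently, read off $Ran(T^*)=Ran(T)$ from the form of $T^*$); both rest on the equivalences listed before Proposition \ref{EP0}.
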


\begin{proof}Let $T$ be EP then the bounded adjointable
$T_1: Ker(T)^{ \perp}=Ran(T) \to Ran(T)$, $T_1x=Tx$, has a
bounded inverse. This fact together with the matrix representation
(\ref{EP2}) give us the desired representation.

Conversely, let $T_1 \in \mathcal{L}(Ran(T),Ran(T))$ and $T$ admit
the matrix representation (\ref{EP5}). Then
\begin{equation} \label{EP33}
T^{ \dag}= \begin{bmatrix} (T_1^{*} \,  T_1)^{-1}T_{1}^{*} & 0 \\
0 & 0 \end{bmatrix}= \begin{bmatrix} T_1^{ \, -1} & 0 \\
0 & 0 \end{bmatrix}
\end{equation}
is the Moore-Penrose inverse of $T$ which commutes with $T$,
i.e., $T$ is EP.
\end{proof}

Suppose $M$ and $N$ are submodule of a Hilbert C*-module $X$, then
$(M+N)^{ \perp}=M^{ \perp} \cap N^{ \perp}$. In particular, if $
M+N$ is dense in its biorthogonal complement then
$$(M^{ \perp} \cap N^{ \perp})^{ \perp} = (M+N)^{ \perp \, \perp}= \overline{M+N}.$$

\begin{theorem} \label{EP6} Let $X$ be a Hilbert
$ \mathcal{A}$-module and $T, S \in \mathcal{L}(X)$ are EP
operators with closed ranges. Among the following four properties
of $T$, $S$ and $TS$, the implication $(i) \rightarrow  (iii)$
holds. Moreover, (i) and (ii) are equivalent to (iii) and (iv).
\begin{list}{(\roman{cou001})}{\usecounter{cou001}}
\item $TS$ is an EP operator with closed range.
\item $Ker(T) + Ker(S)$ is dense in its biorthogonal complement.
\item $Ran(TS)=Ran(T)\cap Ran(S)$.
\item $Ker(TS)= \overline{ Ker(T) + Ker(S)}$.
\end{list}
\end{theorem}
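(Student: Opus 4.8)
The plan is to prove the single implication $(i)\Rightarrow(iii)$ first, and then to obtain the full equivalence of $(i)\wedge(ii)$ with $(iii)\wedge(iv)$ by a duality argument between ranges and kernels. The bookkeeping fact I would record at the outset is that, since $T$ and $S$ are EP with closed range, $Ker(T)=Ran(T)^{\perp}$ and $Ker(S)=Ran(S)^{\perp}$; hence, using $(M+N)^{\perp}=M^{\perp}\cap N^{\perp}$ together with the fact that $Ran(T)$ and $Ran(S)$ are orthogonally complemented (so equal to their biorthogonal complements), one gets the identity $Ran(T)\cap Ran(S)=(Ker(T)+Ker(S))^{\perp}$, valid for all EP $T,S$ irrespective of the status of $TS$. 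This identity is the hinge of the whole argument.

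For $(i)\Rightarrow(iii)$ the inclusion $Ran(TS)\subseteq Ran(T)\cap Ran(S)$ is the easy half: $Ran(TS)\subseteq Ran(T)$ is trivial, and since $TS$ is EP with closed range, $Ran(TS)=Ran((TS)^{*})=Ran(S^{*}T^{*})\subseteq Ran(S^{*})=Ran(S)$, the last equality using that $S$ is EP. For the reverse inclusion I would pass to matrix representations along $X=Ran(S)\oplus Ker(S)$, which is legitimate since $S$ is EP. By Lemma~\ref{EP4} the operator $S$ becomes $\mathrm{diag}(S_1,0)$ with $S_1$ invertible on $Ran(S)$, and by Proposition~\ref{EP06} the invariance $T(Ran(S))\subseteq Ran(S)$ forces $T$ to be block upper triangular, $T=\left[\begin{smallmatrix}T_{11}&T_{12}\\0&T_{22}\end{smallmatrix}\right]$. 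Then $TS=\mathrm{diag}(T_{11}S_1,0)$, so (i) forces $B:=T_{11}S_1$ to be EP on $Ran(S)$ with $Ran(TS)=Ran(T_{11})$, while a direct computation gives $Ran(T)\cap Ran(S)=Ran(T_{11})+T_{12}(Ker(T_{22}))$. Thus (iii) reduces to the single inclusion $T_{12}(Ker(T_{22}))\subseteq Ran(T_{11})$.

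This inclusion is the heart of the matter, and I expect it to be the main obstacle. Using only that $T$ is EP (via $Ran(T)\perp Ker(T)$) one obtains easily the weaker statement $T_{12}(Ker(T_{22}))\subseteq Ran(T_{11}^{*})$; the difficulty is to upgrade $Ran(T_{11}^{*})$ to $Ran(T_{11})$. This is precisely where the EP-ness of the product must be used decisively: it is equivalent to showing that $T^{\dag}$ (and not merely $T$) leaves $Ran(S)$ invariant, and the EP-ness of $B=T_{11}S_1$ is the extra input that relates $Ker(T_{11})$ and $Ker(T_{11}^{*})$ through the invertible $S_1$. I would try to close this either by exploiting $Ker(B)=Ker(B^{*})$ to identify the orthogonal complement of $Ran(T_{11})$ inside $Ran(S)$, or by invoking the orthogonal-summand characterization of closed range for $TS$ recalled in Section~2, which guarantees that both $Ker(T)+Ran(S)$ and $Ker(S)+Ran(T)$ split off orthogonally and lets one place the relevant preimage on the correct summand.

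Granting $(i)\Rightarrow(iii)$, the biconditional $(i)\wedge(ii)\Leftrightarrow(iii)\wedge(iv)$ follows by soft complementation arguments built on the hinge identity. Assuming (i) and (ii): (iii) is already in hand, and since $TS$ is EP one has $Ker(TS)=Ran(TS)^{\perp}=(Ran(T)\cap Ran(S))^{\perp}=(Ker(T)+Ker(S))^{\perp\perp}$, which equals $\overline{Ker(T)+Ker(S)}$ exactly because (ii) says $Ker(T)+Ker(S)$ is dense in its biorthogonal complement; this is (iv). Conversely, assuming (iii) and (iv): the hinge identity exhibits $Ran(TS)=(Ker(T)+Ker(S))^{\perp}$ as an orthogonal complement, hence closed, so $TS$ has closed range; combining (iii) and (iv) gives $Ker(TS)^{\perp}=Ran(TS)$, and since closed range makes $Ker(TS)$ orthogonally complemented one deduces $Ker((TS)^{*})=Ran(TS)^{\perp}=Ker(TS)$, i.e. $TS$ is EP, which is (i). Finally, with (i) available, $Ker(TS)=Ran(TS)^{\perp}=(Ker(T)+Ker(S))^{\perp\perp}$, and comparison with (iv) yields $\overline{Ker(T)+Ker(S)}=(Ker(T)+Ker(S))^{\perp\perp}$, which is (ii).
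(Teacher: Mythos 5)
Your ``hinge identity'' $Ran(T)\cap Ran(S)=(Ker(T)+Ker(S))^{\perp}$, the easy inclusion $Ran(TS)\subseteq Ran(T)\cap Ran(S)$, and the whole derivation of $(i)\wedge(ii)\Leftrightarrow(iii)\wedge(iv)$ from the implication $(i)\Rightarrow(iii)$ are correct, and they match the paper's own soft arguments almost verbatim (you even supply the step $(iii)\wedge(iv)\Rightarrow(ii)$, which the paper leaves implicit). The problem is that the implication $(i)\Rightarrow(iii)$ itself --- the only part of the theorem with real content --- is never proved. Your reduction is fine: with $X=Ran(S)\oplus Ker(S)$, $S=\mathrm{diag}(S_1,0)$, and $T$ upper triangular by Proposition~\ref{EP06}, statement (iii) is indeed equivalent to $T_{12}(Ker(T_{22}))\subseteq Ran(T_{11})$, which in turn is equivalent to $Ran(T)\cap Ran(S)\cap Ker(TS)=\{0\}$. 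But at exactly this point you write that you ``expect it to be the main obstacle'' and ``would try to close this either by'' exploiting $Ker(B)=Ker(B^{*})$ or by the orthogonal-summand criterion for closed range of a product. Neither route is carried out, and neither closes the gap as stated: $Ker(B)=Ker(B^{*})$ translates into $Ker(T_{11}^{*})=S_1^{-1}(Ker(T_{11}))$, and pushing your weaker inclusion $T_{12}(Ker(T_{22}))\subseteq Ran(T_{11}^{*})$ through it only converts the problem into showing that an element of $Ran(T)\cap Ran(S)$ killed by $T_{11}^{*}$ vanishes --- the same difficulty in new notation; and the closed-range criterion tells you that $Ker(T)+Ran(S)$ is an orthogonal summand but supplies no mechanism for ``placing the preimage on the correct summand.'' So the heart of the theorem is missing, and you say so yourself.

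For comparison, the paper closes precisely this step, and by a different bookkeeping: it represents $S$ as a map between the two decompositions, $Ran(S)\oplus Ker(S)\to Ran(T)\oplus Ker(T)$ (Lemma~\ref{EP1}), keeps $T$ diagonal (Lemma~\ref{EP4}), and notes $Ker(TS)=Ker(S)\oplus Ker(S_1)$. It then takes $y\in Ran(T)\cap Ran(S)$ lying in $Ran(TS)^{\perp}$, uses EP-ness of $TS$ to identify $Ran(TS)^{\perp}$ with $Ker(TS)$, deduces $S_1y=0$, and then --- using the matrix of $S^{*}$ together with $Ran(S)=Ran(S^{*})$ --- argues that $S_2y=0$ as well, so that $Sy=0$ and $y\in Ker(S)\cap Ran(S)=\{0\}$; since $Ran(TS)$ is an orthogonal summand contained in $Ran(T)\cap Ran(S)$, this kills the orthogonal defect and gives the reverse inclusion. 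One can debate how transparent the paper's final step (the vanishing of $S_2y$) is, but it is a concrete argument where your text stops at the point where the concrete argument has to begin. As it stands, your proposal proves the periphery of the theorem and leaves its core as a conjecture.
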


\begin{proof}Suppose $T$, $S$ and $TS$ are EP operators with closed ranges.
Using Lemmata \ref{EP1}, \ref{EP4} and the orthogonal sums $X=Ker(T) \oplus Ran(T)$
and $X=Ker(S) \oplus Ran(S)$, we get the matrix decompositions

\begin{equation} \label{EP66}
T=\begin{bmatrix} T_1 & 0 \\ 0 & 0 \end{bmatrix}:
\begin{bmatrix} Ran(T) \\ Ker(T) \end{bmatrix} \to
\begin{bmatrix} Ran(T)  \\ Ker(T)  \end{bmatrix}
\end{equation}
and
\begin{equation} \label{EP666}
S=\begin{bmatrix} S_1 & 0 \\ S_2 & 0 \end{bmatrix}:
\begin{bmatrix} Ran(S) \\ Ker(S) \end{bmatrix} \to
\begin{bmatrix} Ran(T)  \\ Ker(T)  \end{bmatrix},
\end{equation}
with respect to the orthogonal sums of submodules. Moreover, the
adjoint of $S$ is given by
\begin{equation} \label{EP6666}
S^*=\begin{bmatrix} S_1^* & S_2^* \\
0 & 0 \end{bmatrix}: \begin{bmatrix} Ran(T)  \\ Ker(T)
\end{bmatrix} \to \begin{bmatrix} Ran(S) \\ Ker(S)
\end{bmatrix} .
\end{equation}
Since $T_1 : Ran(T) \to Ran(T)$ is invertible we obtain $ Ker(TS)
\cap Ran(S)= Ker(S_1),$ which implies that
\begin{equation} \label{EP61}
Ker(TS)=Ker(TS) \cap ( Ker(S) \oplus Ran(S))=Ker(S) \oplus Ker(S_1).
\end{equation}
Since $TS$ is EP, $Ran(TS)=Ran(S^*T^*) \subseteq Ran(S^*)=Ran(S) $,
which implies $Ran(TS) \subseteq Ran(T) \cap Ran(S)$.

For the converse, let $y \in Ran(T) \cap Ran(S)$ and $y \in
Ran(TS)^{ \perp}=Ker(TS)=Ker(S) \oplus Ker(S_1)$. Then $S_1 y
=0$. Using the matrix decompositions (\ref{EP666}) and
(\ref{EP6666}) for $S$ and $S^*$, respectively, and fact that
$Ran(S)=Ran(S^*)$, we obtain
\begin{equation} \label{EP64}
\begin{bmatrix} S_1 & 0 \\
S_2 & 0 \end{bmatrix}
\begin{bmatrix} y \\ 0 \end{bmatrix}=
\begin{bmatrix} S_1^* & S_2^* \\
0 & 0 \end{bmatrix}
\begin{bmatrix} z \\ w \end{bmatrix},
\end{equation}
for some $z \in Ran(T)$ and $w \in Ket(T)$. Then $S_1y=S_2 y=0$,
for every $y \in Ran(T) \cap Ran(S)$, that is,
$$ \begin{bmatrix} S_{1} & 0 \\ S_{2} & 0 \end{bmatrix}
\begin{bmatrix} y \\ 0 \end{bmatrix}= \begin{bmatrix} 0 \\ 0 \end{bmatrix}.$$
Hence, $y \in Ker(S) \cap Ran(S)=\{ 0 \}$ which yields $Ran(T)
\cap Ran(S) \subseteq Ran(TS)$. That is, the implication $(i)
\rightarrow  (iii)$ holds.

Suppose $TS$ is an EP operator with closed range and $Ker(T) +
Ker(S)$ is dense in its biorthogonal complement. Then $
Ran(TS)=Ran(T)\cap Ran(S)$ by the preceding argument, which
implies that
\begin{eqnarray*}
\overline{ Ker(T) + Ker(S)}= (Ker(T) + Ker(S))^{ \perp \,
\perp}&=& ( Ran(T)\cap Ran(S))^{ \perp} \\
      &=&  Ran(TS)^{ \perp}=Ker(TS).
\end{eqnarray*}

Conversely, suppose $Ran(TS)=Ran(T)\cap Ran(S)$ and $Ker(TS)=
\overline{ Ker(T) + Ker(S)}$ then $Ran(TS)$ is a closed submodule
as an intersection of two closed submodules $Ran(T)$ and $Ran(S)$.
Hence, $Ran((TS)^*)$ is orthogonally complemented in $X$, cf.
\cite[Theorem 3.2]{LAN}. We find
\begin{eqnarray*}
Ran((TS)^*)=Ker(TS)^{ \, \perp} &=& \overline{ Ker(T) + Ker(S)}^{
\, \perp} \\
&=& Ran(T)\cap Ran(S)= Ran(TS),
\end{eqnarray*}
i.e. $TS$ is an EP operator with closed range.
\end{proof}

Recall that a C*-algebra of compact operators is a $c_{0}$-direct
sum of elementary C*-algebras $\mathcal{K}(H_{i})$ of all compact
operators acting on Hilbert spaces $H_{i}, \ i \in I$, i.e.
$\mathcal{A} = c_{0}$-$ \oplus_{i \in I}\mathcal{K} (H_{i})$,
cf.~\cite[Theorem 1.4.5]{ARV}. Suppose $ \mathcal{A}$ is an
arbitrary C*-algebra of compact operators. It is well known that
every norm closed submodule of every Hilbert $
\mathcal{A}$-module is automatically an orthogonal summand.
Further generic properties of the category of Hilbert C*-modules
over C*-algebras which characterize precisely the C*-algebras of
compact operators have been found in \cite{FR1, F-S, FS2} and
references therein. We can reformulate Theorem \ref{EP6} in
terms of bounded $\mathcal{A}$-linear maps on Hilbert C*-modules
over C*-algebras of compact operators.

\begin{corollary}\label{EPA1}Suppose $ \mathcal{A}$ is an
arbitrary C*-algebra of compact operators, $X$ is a Hilbert $
\mathcal{A}$-module and $T, S \in \mathcal{L}(X)$ are EP
operators with closed range. Then $TS$ is an EP operator with
closed range if and only if $Ran(TS)=Ran(T)\cap Ran(S)$ and
$Ker(TS)= \overline{ Ker(T) + Ker(S)}$.
\end{corollary}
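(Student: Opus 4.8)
The plan is to deduce the corollary directly from Theorem~\ref{EP6} by verifying that hypothesis (ii) of that theorem, namely that $Ker(T) + Ker(S)$ be dense in its biorthogonal complement, is automatically fulfilled whenever the coefficient algebra $\mathcal{A}$ is a C*-algebra of compact operators. Once (ii) is known to hold with no extra assumption, the equivalence ``(i) and (ii) are equivalent to (iii) and (iv)'' supplied by Theorem~\ref{EP6} collapses to the asserted equivalence between (i) and the conjunction of (iii) and (iv).

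First I would isolate the single structural feature of $\mathcal{A}$ that is being used: over a C*-algebra of compact operators every norm closed submodule $V$ of a Hilbert $\mathcal{A}$-module is an orthogonal summand, and therefore coincides with its biorthogonal complement, $V = V^{\perp \, \perp}$ (this is the property recalled immediately before the statement). To apply it to the possibly non-closed submodule $M := Ker(T) + Ker(S)$, I would first record the elementary identity $M^{\perp} = (\overline{M})^{\perp}$, which holds in any Hilbert C*-module because $\langle x, y \rangle$ is continuous in $x$; taking orthogonal complements once more gives $M^{\perp \, \perp} = (\overline{M})^{\perp \, \perp}$.

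Now $\overline{M}$ is a norm closed submodule, so the compact-algebra hypothesis yields $\overline{M} = (\overline{M})^{\perp \, \perp} = M^{\perp \, \perp}$. Hence $M = Ker(T) + Ker(S)$ is dense in its biorthogonal complement, which is exactly condition (ii) of Theorem~\ref{EP6}. Substituting this into the equivalence $\big[ (i) \wedge (ii) \big] \Leftrightarrow \big[ (iii) \wedge (iv) \big]$ of the theorem, and noting that $(i) \wedge (ii)$ is logically equivalent to $(i)$ alone, we obtain that $TS$ is EP with closed range if and only if $Ran(TS) = Ran(T) \cap Ran(S)$ and $Ker(TS) = \overline{Ker(T) + Ker(S)}$.

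I expect no serious obstacle here: the entire content is the reduction to Theorem~\ref{EP6}, and the only point requiring a moment's care is the passage from $M$ to $\overline{M}$ via $M^{\perp} = (\overline{M})^{\perp}$, so that the orthogonal-summand property (which applies only to closed submodules) can legitimately be invoked. Everything else is a direct citation of the theorem together with the quoted characterization of C*-algebras of compact operators.
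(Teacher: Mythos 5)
Your proposal is correct and follows exactly the route the paper intends: Corollary~\ref{EPA1} is presented as a direct reformulation of Theorem~\ref{EP6}, relying on the fact quoted immediately before it that over a C*-algebra of compact operators every norm closed submodule is an orthogonal summand, so that condition (ii) of the theorem holds automatically and $(i)\wedge(ii)$ collapses to $(i)$. Your explicit verification that $Ker(T)+Ker(S)$ is dense in its biorthogonal complement, via $M^{\perp}=(\overline{M})^{\perp}$ and $\overline{M}=(\overline{M})^{\perp\,\perp}$, simply spells out the step the paper leaves tacit.
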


Koliha in \cite{KolihaCommuting} gave necessary and sufficient
conditions for elements of C*-algebras which commute with their
Moore-Penrose inverse. He also studied conditions which ensure
that the property is preserved under multiplication. As a special
case of our results we recover some parts of Theorem 4.3 of
\cite{KolihaCommuting}.
\begin{corollary}Suppose $ \mathcal{A}$ is an
arbitrary C*-algebra and $a, \, b$ and $ab$ commute with their
Moore-Penrose inverse. Then $ab  \mathcal{A}= a  \mathcal{A} \cap b
\mathcal{A}$.
\end{corollary}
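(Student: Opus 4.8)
The plan is to realize the C*-algebra $\mathcal{A}$ as a Hilbert module over itself and then transport the hypotheses to left-multiplication operators, so that the statement becomes a direct instance of the implication $(i) \rightarrow (iii)$ of Theorem \ref{EP6}. Equip $\mathcal{A}$ with the $\mathcal{A}$-valued inner product $\langle x,y \rangle = x^* y$, making it a right Hilbert $\mathcal{A}$-module. For each $c \in \mathcal{A}$ the left-multiplication map $L_c : x \mapsto cx$ is a bounded adjointable $\mathcal{A}$-linear operator with $(L_c)^* = L_{c^*}$, and its range is exactly $Ran(L_c) = c\mathcal{A}$. Under this dictionary the product of operators corresponds to the product of elements, $L_{ab} = L_a L_b$.

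Next I would translate the Moore-Penrose hypothesis. A direct check shows that if $c$ admits a Moore-Penrose inverse $c^{\dag}$ in $\mathcal{A}$, then $L_{c^{\dag}}$ satisfies the four relations (\ref{MPinverse}) for $L_c$, so that $(L_c)^{\dag} = L_{c^{\dag}}$; since $L_{c^{\dag}}$ is bounded and adjointable, $L_c$ then has closed range by the Xu--Sheng characterization. Consequently the identity $c\,c^{\dag} = c^{\dag}c$ in $\mathcal{A}$ is equivalent to $L_c (L_c)^{\dag} = (L_c)^{\dag} L_c$ in $\mathcal{L}(\mathcal{A})$, which by the list of equivalent conditions preceding Proposition \ref{EP0} says exactly that $L_c$ is EP with closed range. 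Applying this to $c = a$, $b$ and $ab$, the three operators $L_a$, $L_b$ and $L_{ab} = L_a L_b$ are all EP with closed range.

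With the hypotheses in this form, Theorem \ref{EP6} applies verbatim with $T = L_a$ and $S = L_b$: since $T$, $S$ and $TS$ are EP with closed range, the implication $(i) \rightarrow (iii)$ yields $Ran(L_a L_b) = Ran(L_a) \cap Ran(L_b)$. Reading the ranges back through the identification $Ran(L_c) = c\mathcal{A}$ gives the desired equality $ab\mathcal{A} = a\mathcal{A} \cap b\mathcal{A}$. The only genuine step is the bookkeeping in the second paragraph, verifying that the element-level Moore-Penrose inverse passes to the operator $L_c$ and that the commutation $c\,c^{\dag} = c^{\dag}c$ matches the operator EP condition; everything else is a direct transcription. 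The non-unitality of $\mathcal{A}$ causes no difficulty, since the module structure and the operators $L_c$ are defined without reference to a unit.
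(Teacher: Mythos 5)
Your proposal is correct and follows essentially the same route as the paper: the paper likewise regards $\mathcal{A}$ as a Hilbert C*-module over itself, passes to the left-multiplication operators $L_a$ with $Ran(L_a)=a\mathcal{A}$ and $Ker(L_a)=a_{-1}(0)$, and invokes Theorem \ref{EP6}. Your second paragraph merely makes explicit the bookkeeping (that $(L_c)^{\dag}=L_{c^{\dag}}$ and that element-level commutation with the Moore--Penrose inverse translates to the operator EP condition) which the paper leaves implicit.
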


\begin{corollary}\label{EPA2}Suppose $ \mathcal{A}$ is an
arbitrary C*-algebra of compact operators and $a$ and $b$
commute with their Moore-Penrose inverse. Then $ab$
commutes with its Moore-Penrose inverse if and only if
 $ab  \mathcal{A}= a  \mathcal{A} \cap b
\mathcal{A}$
 and $(ab)_{-1}= \overline{ a_{-1}(0)+ a_{-1}(0)}$, in which
 $ a_{-1}(0)=\{ x \in \mathcal{A}:~xa=0 \}$.
\end{corollary}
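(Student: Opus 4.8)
The plan is to deduce the corollary from Corollary \ref{EPA1} by realizing $\mathcal{A}$ as a Hilbert module over itself and transporting all the data through the left regular representation. Regard $X=\mathcal{A}$ as a right Hilbert $\mathcal{A}$-module with inner product $\langle x,y\rangle=x^*y$, and for $c\in\mathcal{A}$ let $L_c\in\mathcal{L}(\mathcal{A})$ denote left multiplication $L_c(x)=cx$. A one-line computation gives $\langle L_cx,y\rangle=\langle x,L_{c^*}y\rangle$, so $L_c$ is adjointable with $L_c^*=L_{c^*}$, and $c\mapsto L_c$ is a faithful $*$-homomorphism (if $cx=0$ for all $x$ then $cc^*=0$, hence $c=0$).

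I would then set up a dictionary between elements and operators. Applying $L$ to the four Moore-Penrose equations (\ref{MPinverse}) written for $c$ and $c^{\dag}$ shows that $L_{c^{\dag}}$ satisfies those same equations for $L_c$; by uniqueness of the Moore-Penrose inverse, $L_c^{\dag}=L_{c^{\dag}}$. Since by hypothesis $a$ and $b$ commute with their Moore-Penrose inverses, $a^{\dag}$ and $b^{\dag}$ exist, so $L_a$ and $L_b$ are Moore-Penrose invertible and therefore have closed range. Faithfulness of $L$ gives $L_{cc^{\dag}}=L_{c^{\dag}c}$ if and only if $cc^{\dag}=c^{\dag}c$, so $L_c$ is EP exactly when $c$ commutes with $c^{\dag}$; in particular $L_a$ and $L_b$ are EP with closed range. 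Finally $Ran(L_c)=c\mathcal{A}$, $Ker(L_c)=c_{-1}(0)$, and $L_{ab}=L_aL_b$.

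With this dictionary I would apply Corollary \ref{EPA1} with $T=L_a$ and $S=L_b$ on $X=\mathcal{A}$; this is legitimate precisely because $\mathcal{A}$ is a C*-algebra of compact operators. The assertion that $L_{ab}=L_aL_b$ is EP with closed range is, via faithfulness of $L$, exactly the statement that $(ab)^{\dag}$ exists and $ab$ commutes with it. The range condition $Ran(L_aL_b)=Ran(L_a)\cap Ran(L_b)$ translates to $ab\mathcal{A}=a\mathcal{A}\cap b\mathcal{A}$, and the kernel condition $Ker(L_aL_b)=\overline{Ker(L_a)+Ker(L_b)}$ translates to $(ab)_{-1}(0)=\overline{a_{-1}(0)+b_{-1}(0)}$. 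Combining these equivalences yields the claim.

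The main obstacle is not any new analysis---all of it already resides in Theorem \ref{EP6} and Corollary \ref{EPA1}---but the careful verification of the dictionary. The delicate points are the identity $L_c^{\dag}=L_{c^{\dag}}$ together with the equivalence between Moore-Penrose invertibility of the element $c$ in $\mathcal{A}$ and closed range of the operator $L_c$, so that EP-ness and closed range genuinely correspond on both sides, and the repeated use of faithfulness of $L$ to pull equalities of operators back to equalities of elements. Once these are in place, the identifications $Ran(L_c)=c\mathcal{A}$, $Ker(L_c)=c_{-1}(0)$ and $Ran(L_a)\cap Ran(L_b)=a\mathcal{A}\cap b\mathcal{A}$ are immediate from the definitions.
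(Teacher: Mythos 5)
Your proposal is correct and is essentially the paper's own argument: the paper likewise realizes $\mathcal{A}$ as a right Hilbert $\mathcal{A}$-module over itself, passes to the left multiplication operators $L_a$ with $Ker(L_a)=a_{-1}(0)$ and $Ran(L_a)=a\,\mathcal{A}$, and then invokes Theorem \ref{EP6} and Corollary \ref{EPA1}. Your write-up simply makes explicit the dictionary the paper leaves implicit (that $L$ is a faithful $*$-homomorphism, that $L_c^{\dag}=L_{c^{\dag}}$ so that Moore--Penrose invertibility and EP-ness correspond on both sides), which is a welcome addition rather than a deviation.
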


Recall that every C*-algebra is an $ \mathcal{A}$-module on its
own and define the bounded operators $L_a:\mathcal{A} \to
\mathcal{A}$, $L_a(x)=ax$ then $a^{-1}(0)= Ker(L_a)$ and
$Ran(L_a)=a \, \mathcal{A}$. The above facts follows from Theorem
\ref{EP6} and Corollary \ref{EPA1}.

{\bf Acknowledgement}: The author would like to thank
the referee for his/her careful reading and useful comments.

\end{document}